\documentclass{amsart}

\usepackage{graphicx}
\usepackage{amsthm}
\usepackage{amsmath}
\usepackage{amsfonts}
\usepackage{amssymb}
\usepackage[usenames,dvipsnames]{color}
\usepackage{tikz-cd}

\usepackage[colorlinks=true,citecolor=blue,urlcolor=blue,%
linkcolor=blue,pdfpagemode=UseNone]{hyperref}

\graphicspath{{./}{figures/}}

%%%%%%%%%%%%%%%%%%%%%%%%%%%%%%%%%%%%%%%%%%%%%%%%%%%%%

\theoremstyle{plain}
\newtheorem{theorem}{Theorem}[section]
\newtheorem{lemma}[theorem]{Lemma}
\newtheorem{corollary}[theorem]{Corollary}
\newtheorem{proposition}[theorem]{Proposition}
\theoremstyle{definition}

\newtheorem{definition}[theorem]{Definition}
\theoremstyle{remark}
\newtheorem{remark}[theorem]{Remark}
\newtheorem{example}[theorem]{Example}

\numberwithin{equation}{section}

%\newcommand{\hypref}[2]{{\tt #2}}

%\renewcommand{\baselinestretch}{1.4}

%\frenchspacing

\newcommand{\cA}{{\mathcal A}}
\newcommand{\cB}{{\mathcal B}}
\newcommand{\cF}{{\mathcal F}}
\newcommand{\cG}{{\mathcal G}}

\newcommand{\cU}{{\mathcal U}}
\newcommand{\cV}{{\mathcal V}}

\newcommand{\cX}{{\mathcal X}}
\newcommand{\cY}{{\mathcal Y}}

\newcommand{\bR}{{\mathbb R}}

\DeclareMathOperator{\im}{im}
\DeclareMathOperator{\interior}{int}

\newcommand{\mvmap}{{\,\overrightarrow{\to}\,}}
%\newcommand{\mvmap}{\multimap}

%%%%%%%%%%%%%%%%%%%%%%%%%%%%%%%%%%%%%%%%%%%%%%%%%%%%%

\begin{document}

\title[Inducing a map on homology from a correspondence]%
{Inducing a map on homology \\ from a correspondence}

\author[S.~Harker]{Shaun Harker}
\address{SH: Department of Mathematics,
Hill Center-Busch Campus, Rutgers, The State University of New Jersey,
110 Frelinghusen Rd, Piscataway, NJ 08854-8019, USA}
%\email{sharker@math.rutgers.edu}

\author[H.~Kokubu]{Hiroshi Kokubu}
\address{HK: Department of Mathematics\ /\ JST CREST, Kyoto University,
Kyoto 606-8502, Japan}
%\email{kokubu@math.kyoto-u.ac.jp}

\author[K.~Mischaikow]{Konstantin Mischaikow}
\address{KM: Department of Mathematics and BioMaPS,
Hill Center-Busch Campus, Rutgers, The State University of New Jersey,
110 Frelinghusen Rd, Piscataway, NJ 08854-8019, USA}
%\email{mischaik@math.rutgers.edu}

\author[P.~Pilarczyk]{Pawe\l{} Pilarczyk}
\address{PP: Institute of Science and Technology Austria, Am Campus 1,
3400 Klosterneuburg, Austria}
%\email{pawel.pilarczyk@ist.ac.at}

% \subjclass is required.
% 55M99: Algebraic topology / Classical topics 
%/ None of the above, but in this section
% 55-04: Algebraic topology / Explicit machine computation and programs
\subjclass[2010]{Primary 55M99. Secondary 55-04.
Key words and phrases: homology, homomorphism,
continuous map, time series, combinatorial approximation, grid,
multivalued map, correspondence, acyclicity.}

\date{November 18, 2014}

\begin{abstract}
We study the homomorphism induced in homology
by a closed correspondence between topological spaces,
using projections from the graph of the correspondence
to its domain and codomain.
We provide assumptions under which the homomorphism
induced by an outer approximation of a continuous map
coincides with the homomorphism induced in homology by the map.
In contrast to more classical results we do not require
that the projection to the domain have acyclic preimages.
Moreover, we show that it is possible to retrieve
correct homological information from a correspondence
even if some data is missing or perturbed.
Finally, we describe an application to combinatorial maps
that are either outer approximations of continuous maps
or reconstructions of such maps from a finite set of data points.
\end{abstract}

\maketitle

%%%%%%%%%%%%%%%%%%%%%%%%%%%%%%%%%%%%%%%%%%%%%%%%%%%%%

\section{Introduction}
\label{sec:intro}

The focus of this paper is on effective methods
for computing $f_*\colon H_*(X,A)\to H_*(Y,B)$,
the homomorphism induced on homology by a continuous map
$f \colon (X, A) \to (Y, B)$, in situations where the map $f$
and/or the underlying spaces are only known via finite approximations.
This problem arises naturally in the context of topological data analysis
or the use of algebraic topological invariants to study nonlinear dynamics.
In general, in these settings, at best one has bounds for the action of $f$,
though estimates of questionable certainty are more likely.
With this in mind, we use correspondences
to represent $f$ (see Section~\ref{sec:prelim} for definitions)
and complexes to represent the domain and codomain.

To put the goals of this paper into perspective,
consider the simpler case of $f\colon X\to Y$.
Let $F\subset X\times Y$ be a correspondence.
Let $p\colon F\to X$ and $q\colon F\to Y$ be the canonical projection maps.
Assume
\begin{description}
\item[A1] $p(F)=X$,
\item[A2] $f(x)\in q(p^{-1}(x))$, for all $x\in X$, and 
\item[A3] $p^{-1}(x)$ is acyclic for all $x\in X$.
\end{description}
Recall the following classical result of Vietoris \cite{vietoris}.
\begin{theorem}
\label{thm:mapping}
Let $Z$ and $X$ be compact metric spaces.
Let $p \colon Z \to X$ be a surjective continuous function.
If $p^{-1} (x)$ is acyclic for every $x \in X$
then $p_*\colon H_*(Z)\to H_*(X)$ is an isomorphism.
\end{theorem}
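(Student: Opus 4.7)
The plan is to invoke the classical \v{C}ech-theoretic approach that is standard for Vietoris's mapping theorem. Working in \v{C}ech homology, which is the natural setting for general compact metric spaces, I would compare $Z$ and $X$ through the nerves of finite open covers.

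First I would consider, for each finite open cover $\cU = \setof{U_i}$ of $X$, the preimage cover $p^{-1}(\cU) = \setof{p^{-1}(U_i)}$ of $Z$. Since $p$ is surjective and $p^{-1}$ commutes with intersections, the two nerves $N(\cU)$ and $N(p^{-1}(\cU))$ coincide as abstract simplicial complexes, and on them $p$ induces the identity. Thus, on the subsystem of covers pulled back from $X$, the map $p^*$ is already a nerve-level isomorphism.

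Next I would show that this subsystem is cofinal among all open covers of $Z$, which is the crux of the argument and where the acyclic-fiber hypothesis enters. Given any finite open cover $\cV$ of $Z$, each compact fiber $p^{-1}(x)$ lies in finitely many members of $\cV$; compactness of $X$ together with closedness of $p$ let us thicken and produce $\cU$ on $X$ with $p^{-1}(\cU)$ refining $\cV$. An acyclic-carrier argument---its input being precisely the acyclicity of $p^{-1}(x)$, suitably propagated to small open neighborhoods via compactness---then shows that the refinement induces an isomorphism on the \v{C}ech homology of the nerves. Equivalently, one can package this through the Leray spectral sequence of $p$: the stalk $(R^j p_*\bZ)_x = \check H^j(p^{-1}(x))$ vanishes for $j>0$ by acyclicity and equals $\bZ$ for $j=0$, so $E_2^{i,j} = H^i(X; R^j p_*\bZ) \Rightarrow H^{i+j}(Z)$ collapses to the row $j=0$, making $p^*$ an isomorphism; universal coefficients (or the homological Leray sequence) then yields the stated isomorphism of $p_*$ on homology.

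The main obstacle I anticipate is the stalk identification $(R^j p_*\bZ)_x = \check H^j(p^{-1}(x))$, which depends on the continuity property $\check H^j(p^{-1}(x)) = \varinjlim_{U \ni x} \check H^j(p^{-1}(U))$. This is where compactness of the fibers, compactness of $X$, and the metric structure of the spaces all enter in an essential way; without them fiber acyclicity need not propagate to acyclicity on small neighborhoods, and the whole Leray/acyclic-carrier machinery breaks down. Once this continuity step is secured, the remaining bookkeeping---tracking the refinement maps and passing to the inverse limit---is routine.
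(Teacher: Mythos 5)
The paper never proves Theorem~\ref{thm:mapping}: it is quoted as a classical result and attributed to Vietoris \cite{vietoris}, so there is no internal argument to compare yours against, and your sketch has to stand on its own as a reconstruction of the classical proof. Its general line --- \v{C}ech theory via nerves of pullback covers, or equivalently the Leray spectral sequence with stalks identified through the continuity property $\check H^j(p^{-1}(x)) \cong \varinjlim_{U \ni x} \check H^j(p^{-1}(U))$ (closedness of $p$ is automatic for compact metric spaces) --- is indeed the standard route to Vietoris--Begle-type theorems, and the observation that the nerves of $\cU$ and $p^{-1}(\cU)$ coincide for surjective $p$ is correct.

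There is, however, a concrete gap, and it sits exactly where the content of the theorem lies. The pulled-back covers $p^{-1}(\cU)$ are \emph{not} cofinal among the open covers of $Z$: refinement would force each $p^{-1}(U)$, which contains entire fibers, to lie inside a single member of an arbitrarily fine cover $\cV$ of $Z$, and this already fails for the constant map $p \colon [0,1] \to \{\mathrm{pt}\}$ with $\cV$ a cover of $[0,1]$ by short intervals; no thickening on $X$ can repair it. Moreover, if cofinality did hold, the conclusion would follow with no acyclicity hypothesis at all, which shows the claim cannot be the right intermediate step. What the classical argument actually proves is weaker and is precisely where acyclicity enters: although not cofinal, the pullback system still computes the \v{C}ech homology of $Z$, because the compact tubes $p^{-1}(U)$, $U \ni x$, form a neighborhood basis of the acyclic fiber $p^{-1}(x)$, so by continuity of \v{C}ech theory small tubes are homologically small and a Begle/acyclic-carrier chain comparison between the nerve of an arbitrary $\cV$ and nerves of pullback covers goes through. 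Your text gestures at this, but packaging it as ``cofinality plus a carrier argument'' conflates an order-theoretic statement (false) with the homological comparison (the theorem itself), and the write-up as given would not survive the $[0,1]\to\{\mathrm{pt}\}$ test. A second, smaller issue: the Leray route produces an isomorphism in \v{C}ech \emph{cohomology}, while the statement concerns homology with coefficients in an arbitrary ring $R$; the transfer via universal coefficients is not the routine bookkeeping you describe (\v{C}ech homology is not exact in general, and one must either use Vietoris/strong homology for compact metric spaces or restrict the coefficients), so that last step needs an explicit argument rather than a parenthesis.
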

In the context of this paper, an immediate consequence
is that $f_* = q_*\circ (p_*)^{-1}$.
On a theoretical level the computation of $f_*$
in terms of acyclic correspondences has a long tradition
dating back at least to the work
of Eilenberg and Montgomery~\cite{eilenberg:montgomery}.
To the best of our knowledge, the first explicit algorithms
based on this idea are presented in \cite{MMP05},
where it is implemented in the context of cubical complexes.
An alternative approach, based on discrete Morse theory and applicable
to arbitrary complexes, is presented in \cite{HMMN13}.
Code based on these implementations can be found at \cite{chomp}.

From the perspective of applications, all three assumptions
{\bf A1}--{\bf A3} are problematic.
In the context of data analysis, the domain used to compute $f_*$
is a complex $\mathsf X$ constructed from a finite data set
and the true domain $X$ of $f$ is unknown.
Thus, as is demonstrated in Example~\ref{ex:henon},
there are situations in which the appropriate assumption
is that $\mathsf X\subset X$, in which case ${\bf A1}$ fails. % (!)

Again, in the context of data analysis one expects
that the data representing $f$ is corrupted by bounded noise,
but in many situations one does not have a clear estimate
on the size of the noise.
Thus, as indicated in Example~\ref{example:noisysample},
the correspondence $F$ constructed using the data
may fail to satisfy {\bf A2}.

Finally, even in settings where complete knowledge of the function $f$
can be assumed, the assumption of acyclicity may be too strong.
Consider a smooth function $f\colon \bR^n\to \bR^n$
and assume that we are interested in computing $f_*$
restricted to a compact subset $X$ represented in terms of a cubical complex,
where $X$ is a set defined in terms of the action induced by $f$.
Problems of this type occur naturally in nonlinear dynamics,
where $(X,A)$ might represent an index pair and $f_*$
becomes a representative for the associated Conley index 
(see \cite{arai:kalies:kokubu:mischaikow:oka:pilarczyk,% order changed
chaos,bush:mischaikow,mischaikow:mrozek}
for more details and concrete examples).

Using the smoothness, there are a variety of methods
by which rigorous bounds on the images of $n$-dimensional cubes
can be computed \cite{capd}.
It is natural to construct a correspondence as follows.
For each $n$-dimensional cube, define the correspondence
in terms of the set of cubes that cover the outer approximation
of the image, based on the rigorous bounds (see Figure~\ref{fig:mapWrap}).
To define the correspondence at the intersection of two $n$-dimensional cubes
in such a way that the compactness assumption of Theorem~\ref{thm:mapping}
is satisfied, it is natural to declare the correspondence
in terms of the union of the correspondences over the $n$-dimensional cubes.
However, since in general the union of acyclic sets need not be acyclic,
at this point one can no longer assume that {\bf A3} is satisfied.

\begin{figure}[htbp]
%\centerline{\includegraphics[width=0.5\textwidth]{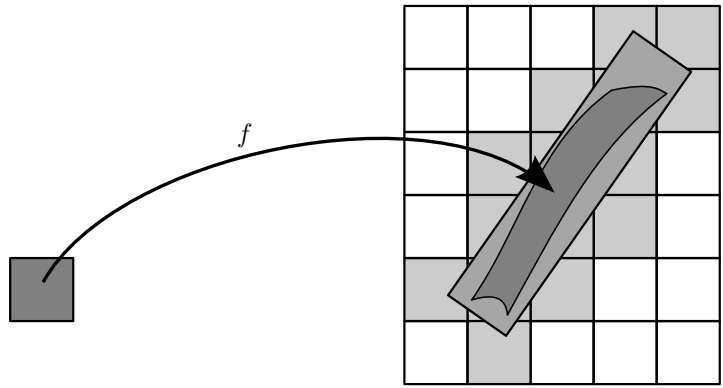}}
\def\svgwidth{0.75\textwidth}
\centerline{\input{figures/mapWrap.tex}}
\caption{%
Suppose that an outer bound for the image (shaded in dark gray)
of the $2$-dimensional cube shown in the left is given
in terms of a rectangular set (shaded in medium gray)
that is not aligned with the grid.
If one covers it with the minimal collection of grid elements
that intersect the rectangular set (all shaded in bright gray)
then the obtained set is acyclic, but not convex.
The minimal convex covering contains twice the number of cubes
and thus is a much poorer approximation of the dynamics induced by~$f$.}
\label{fig:mapWrap}
\end{figure}

Keeping these examples in mind, we provide an outline for this paper.
After providing necessary preliminary definitions
in Section~\ref{sec:prelim}, we introduce in Section~\ref{sec:main}
the concepts of {\em homological completeness}
and {\em homological consistency} which allow us to characterize
whether the correspondence contains sufficient information
to recover the appropriate homological information
about the domain and the image into the range, respectively.
In particular, this leads to our main result Theorem~\ref{thm:homF}
in which we guarantee the correct computation of
$f_*\colon H_*(X,A)\to H_*(Y,B)$ based on correspondences
that do not necessarily satisfy {\bf A1}--{\bf A3}.
To extend the applicability of this result,
in Section~\ref{sec:extensions} we focuses
on relating homological information between different correspondences.

As is suggested above, our results have an application
in the context of writing software for computational homology.
For example, to apply \cite{HMMN13,MMP05} it is not only necessary
to obtain an outer approximation of the true underlying function $f$
but also to ensure that the outer approximation is acyclic-valued.
In order to guarantee this, the associated software relies
on performing homology computation using uniform grids
and grid-aligned bounding rectangles. As suggested
in Figure~\ref{fig:mapWrap}, this results in undesirable overestimates.
The results of this paper relax the requirement
to homological completeness and consistency,
which permits algorithms that are more robust,
allowing for a wider choice of complexes and tighter outer approximations.
To emphasize this, we conclude in Section~\ref{sec:comb}
with examples that specifically highlight the relevance
of homological completeness and consistency.

%%%%%%%%%%%%%%%%%%%%%%%%%%%%%%%%%%%%%%%%%%%%%%%%%%%%%

\section{Preliminaries.}
\label{sec:prelim}
Let $X$ and $Y$ be topological spaces.
Let $A \subset X$ and $B \subset Y$.
A \emph{correspondence} from $(X, A)$ to $(Y, B)$
is a pair $(F, F')$ of relations
such that $F \subset X \times Y$ and $F' \subset A \times B$,
and $F' \subset F$.
We identify each relation with its graph.
In particular, we consider the following special correspondences:
a \emph{map}, where every $x \in X$
corresponds to exactly one $y \in Y$ through $F$,
and every $a \in A$ corresponds to exactly one $b \in B$ through $F'$;
and a \emph{multivalued map}, where every $x \in X$
corresponds to at least one $y \in Y$ through~$F$,
and every $a \in A$
corresponds to at least one $b \in B$ through $F'$.
If $(F, F')$ is a map from $(X, A)$ to $(Y, B)$ then $F'$
is uniquely determined by $F$ and $A$, and obviously $F (A) \subset B$,
which justifies the usual notation $F \colon (X, A) \to (Y, B)$.
We remark that requiring that $F (A) \subset B$
in the case of general correspondences is too restrictive
(see Section~\ref{sec:comb} and~\cite{MMP05});
therefore, working explicitly with pairs of relations is necessary.

A correspondence $(F, F')$ is \emph{closed}
if both $F$ and $F'$ are closed as subsets of $X \times Y$.
A correspondence $(G, G')$ is an \emph{enlargement} of $(F, F')$
if $F \subset G$ and $F' \subset G'$,
and is denoted by $(F, F') \subset (G, G')$.
Note that an enlargement of a [multivalued] map is a multivalued map.
A map $f \colon (X, A) \to (Y, B)$ is a \emph{selector}
of a correspondence $(F, F')$ from $(X, A)$ to $(Y, B)$
if $(F, F')$ is an enlargement of $f$.
Observe that if a correspondence $(F, F')$
has a selector then $(F, F')$ is a multivalued map.

A relation $F \subset X \times Y$ is \emph{upper semicontinuous}
if for every $x \in X$, and for every neighborhood $V$ of $F (x)$,
there exists a neighborhood $U$ of $x$ such that $F (U) \subset V$.
A correspondence $(F, F')$ is \emph{upper semicontinuous}
if both $F$ and $F'$ are upper semicontinuous.
Recall the following classical result \cite{browder}:
\begin{theorem}
\label{thm:closed}
Let $F \subset X \times Y$ be a relation.
If $F (x)$ is closed for every $x \in X$,
the domain of $F$ is closed, and the range $Y$ of $F$ is compact,
then $F$ is closed (as a subset of $X \times Y$)
if and only if $F$ is upper semicontinuous.
\end{theorem}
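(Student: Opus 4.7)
The plan is to prove the two implications separately, in each case leveraging the compactness of $Y$.

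For ``$F$ closed $\Rightarrow F$ upper semicontinuous,'' I would argue by contradiction. Fix $x \in X$ and an open $V \supset F(x)$, and suppose that no neighborhood $U$ of $x$ satisfies $F(U) \subset V$. Indexing the neighborhoods of $x$ by reverse inclusion, I can select a net $(x_\alpha, y_\alpha)$ in $F$ with $x_\alpha \to x$ and $y_\alpha \in Y \setminus V$ for every $\alpha$. Since $Y$ is compact, $(y_\alpha)$ admits a convergent subnet $y_\beta \to y$, and because $Y \setminus V$ is closed, $y \notin V$. The corresponding subnet $x_\beta$ still converges to $x$, so $(x_\beta, y_\beta) \to (x, y)$ in $X \times Y$, and because $F$ is closed this forces $(x, y) \in F$. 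Thus $y \in F(x) \subset V$, contradicting $y \notin V$.

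For the converse, I would show directly that $(X \times Y) \setminus F$ is open. Given $(x, y) \notin F$, two cases arise. If $x \notin \dom F$, then closedness of $\dom F$ makes $(X \setminus \dom F) \times Y$ an open neighborhood of $(x, y)$ disjoint from $F$. Otherwise $x \in \dom F$ and $y \notin F(x)$; here $F(x)$ is closed in the compact space $Y$, hence compact, and (using a Hausdorff-type separation on $Y$, automatic in the paper's metric applications) I can separate $F(x)$ from $y$ by disjoint open sets $V \supset F(x)$ and $W \ni y$. Upper semicontinuity of $F$ at $x$ then supplies an open $U \ni x$ with $F(U) \subset V$, and $U \times W$ is an open neighborhood of $(x, y)$ disjoint from $F$, since any $(x',y') \in F \cap (U \times W)$ would give $y' \in F(x') \cap W \subset V \cap W = \emptyset$.

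The hard part will be the separation step in the second implication: it requires a mild separation axiom on $Y$ that is not explicitly listed among the hypotheses but holds automatically in the metric setting where the paper applies this result (and is standard in Browder's formulation). Apart from that subtlety, both implications are routine compactness arguments; the use of nets in the first implication (rather than sequences) is needed because neither $X$ nor $Y$ is assumed to be first countable.
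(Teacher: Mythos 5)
The paper offers no proof of this statement to compare against: it is quoted as a classical fact with a citation to Browder, so your argument stands or falls on its own. It stands. The net argument for ``closed $\Rightarrow$ upper semicontinuous'' is sound: compactness of $Y$ yields the convergent subnet, closedness of $Y\setminus V$ keeps the limit outside $V$, and closedness of $F$ forces the limit pair into $F$, giving the contradiction; in the compact metric setting where the paper actually invokes the theorem you could even replace nets by sequences, but nets are the right tool at the stated level of generality. The converse, separating the compact set $F(x)$ from the point $y$ and then shrinking via upper semicontinuity, is also correct, and you are right to flag that this direction needs $Y$ Hausdorff (or a comparable separation property): that hypothesis is not listed in the theorem as stated, but it is part of Browder's setting and is automatic in every application made in the paper, where $Y$ is a compact metric space. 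One small observation: the hypothesis that $\dom F$ is closed is only genuinely needed in your second implication (the case $x \notin \dom F$); in the first implication it is automatic, since $\dom F$ is the projection of the closed set $F$ along the compact factor $Y$, and such projections are closed. With the separation caveat made explicit, your proposal is a complete proof of the cited result.
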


We consider homology with coefficients in an arbitrary ring $R$,
which will be fixed throughout the paper.
We leave the freedom to choose the most appropriate
homology theory to work with for the reader; however, in the examples
we shall use cubical homology \cite{KMM04}.

A set is \emph{acyclic} with respect to $R$ if its homology is isomorphic
with the homology of a single point,
that is, $H_0 \cong R$ and $H_q \cong \{0\}$
for all $q \neq 0$.
A correspondence $(F, F')$ is \emph{acyclic}
if the image of every point by $F$ is an acyclic set,
and also by $F'$ whenever applicable.

\begin{definition}
\label{def:homF}
Let $(F, F')$ be a closed correspondence from $(X, A)$ to $(Y, B)$.
Let $p \colon (F, F') \to (X, A)$ and $q \colon (F, F') \to (Y, B)$
be the canonical projections.
Let $\widetilde{p_*}$ denote the isomorphism induced by $p_*$
between the quotient space $H_* (F, F') / \ker p_*$ and $\im p_*$.
Let $\widetilde{q_*}$ denote the homomorphism induced by $q_*$
between the quotient space $H_* (F, F') / \ker p_*$
and $H_* (Y, B) / q_* (\ker p_*)$.
The \emph{homomorphism $(F, F')_*$ induced in homology by $(F, F')$}
is defined by
\[
(F, F')_* := \widetilde{q_*} \circ \widetilde{p_*}^{-1} \colon
\im p_* \to H_* (Y, B) / q_* (\ker p_*).
\]
\end{definition}

\begin{remark}
\label{rem:homFcont}
If $f\colon X\to Y$ is a continuous map then $p_*$ is an isomorphism,
and the homomorphism obtained by Definition~\ref{def:homF} applied to $f$
coincides with the homomorphism induced in homology
by the continuous map $f$ in the usual sense.
\end{remark}

\begin{figure}[htbp]
%\centerline{\includegraphics[width=0.6\textwidth]{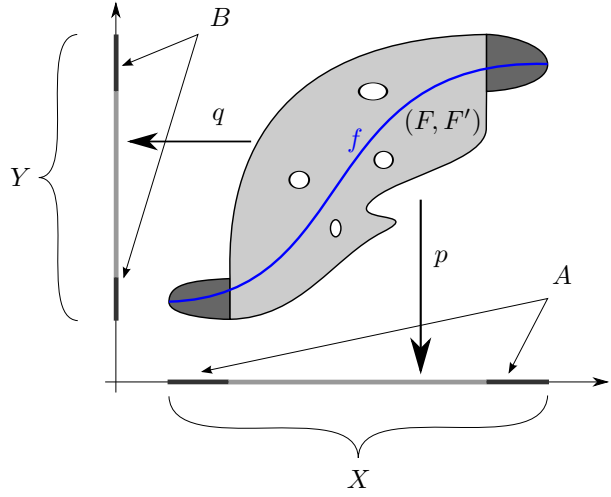}}
\def\svgwidth{0.6\textwidth}
\centerline{\input{figures/graphFG.tex}}
\caption{Notation used in the paper:
$(F, F')$ is a correspondence from $(X, A)$ to $(Y, B)$,
and $f \colon (X, A) \to (Y,B)$ is a continuous selector.
The maps $p \colon (F, F') \to (X, A)$
and $q \colon (F, F') \to (Y, B)$
are the canonical projections onto $X$ and $Y$, respectively.
Note that given the correspondence $(F, F')$, in general $p$ does 
not satisfy the acyclicity assumption of Theorem~\ref{thm:mapping},
but if $\ker p_* \subset \ker q_*$ then $f_*$ can be determined
from the projections (see Theorem~\ref{thm:homF}).}
\label{fig:graphFG}
\end{figure}

%%%%%%%%%%%%%%%%%%%%%%%%%%%%%%%%%%%%%%%%%%%%%%%%%%%%%

\section{Characterizing homomorphisms induced in homology by a correspondence}
\label{sec:main}

We begin by defining the notion of homological completeness
and homological consistency for correspondences,
and by establishing their basic properties.
Then we introduce the definition of the homomorphism
induced in homology by a correspondence,
and we prove our main result, Theorem~\ref{thm:homF}.

%\subsection{Homological completeness}
%\label{sec:homCompl}

Our goal is to use a correspondence $(F,F')$
for which we can compute $(F, F')_*$ to determine
the induced map on homology of a continuous function $f$.
To do this requires and understanding of the information
that is lost or preserved by $\widetilde{q_*}$ and $\widetilde{p_*}^{-1}$.
We begin by considering $p_*$.

\begin{definition}
\label{def:complete}
A closed correspondence $(F, F')$ from $(X, A)$ to $(Y, B)$
is \emph{homologically complete} if the homomorphism $p_*$
induced in homology by the natural projection
$p \colon (F, F') \to (X, A)$ is an epimorphism.
\end{definition}

\begin{lemma}
\label{lem:complExt}
If $(F, F') \subset (G, G')$ are closed correspondences
from $(X, A)$ to $(Y, B)$,
and $p^F \colon (F, F') \to (X, A)$ and $p^G \colon (G, G') \to (X, A)$
are the natural projections,
then $\im p^F_* \subset \im p^G_*$.
\end{lemma}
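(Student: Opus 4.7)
The plan is to exhibit the inclusion of correspondences as a map of pairs, and then use functoriality of homology to factor $p^F_*$ through $p^G_*$.

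First I would observe that the hypothesis $(F, F') \subset (G, G')$ means precisely $F \subset G$ and $F' \subset G'$, so the set-theoretic inclusion defines a continuous map of pairs $i \colon (F, F') \hookrightarrow (G, G')$. (Continuity is automatic since we are using the subspace topologies inherited from $X \times Y$.) Next, I would verify the key factorization at the level of topological pairs: for any $(x,y) \in F$ one has
\[
p^G(i(x,y)) = p^G(x,y) = x = p^F(x,y),
\]
and similarly on $F'$, so $p^F = p^G \circ i$ as maps of pairs from $(F, F')$ to $(X, A)$.

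Applying the homology functor (which respects composition and maps of pairs) yields $p^F_* = p^G_* \circ i_*$ as homomorphisms $H_*(F, F') \to H_*(X, A)$. Consequently, for any $\alpha \in H_*(F, F')$,
\[
p^F_*(\alpha) = p^G_*(i_*(\alpha)) \in \im p^G_*,
\]
so $\im p^F_* \subset \im p^G_*$.

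There is essentially no hard step here; the only thing to be careful about is confirming that the inclusion is a well-defined morphism of pairs, which is immediate from the definition of enlargement. The lemma is then a one-line consequence of functoriality.
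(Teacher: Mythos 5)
Your proof is correct and follows exactly the same route as the paper's: factor $p^F = p^G \circ \iota$ through the inclusion $\iota \colon (F,F') \to (G,G')$, apply functoriality to get $p^F_* = p^G_* \circ \iota_*$, and read off the inclusion of images. You simply spell out the verification of the factorization in more detail than the paper does.
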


\begin{proof}
Since obviously $p^F = p^G \circ \iota$,
where $\iota \colon (F, F') \to (G, G')$ is the inclusion,
the same holds true after applying the homology functor:
$p^F_* = p^G_* \circ \iota_*$.
The conclusion follows.
\end{proof}

\begin{proposition}
\label{prop:complExt}
Let $(F, F') \subset (G, G')$ be closed correspondences
from $(X, A)$ to $(Y, B)$.
If $(F, F')$ is homologically complete then so is $(G, G')$.
\end{proposition}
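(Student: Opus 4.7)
The plan is to deduce this immediately from Lemma~\ref{lem:complExt}. Homological completeness of $(F,F')$ means precisely that $\im p^F_* = H_*(X,A)$. By Lemma~\ref{lem:complExt} applied to the inclusion $(F,F') \subset (G,G')$, we have $\im p^F_* \subset \im p^G_*$, which combined with the obvious inclusion $\im p^G_* \subset H_*(X,A)$ forces $\im p^G_* = H_*(X,A)$. Hence $p^G_*$ is an epimorphism, so $(G,G')$ is homologically complete.

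There is no real obstacle here: the proposition is a direct monotonicity statement for the image of the projection on homology, and all the work is packaged into the lemma. The only thing to spell out, if desired, is the chain of inclusions
\[
H_*(X,A) = \im p^F_* \subset \im p^G_* \subset H_*(X,A),
\]
which immediately gives the desired equality. I would simply state this observation and cite the lemma, keeping the proof to one or two sentences.
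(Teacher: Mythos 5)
Your argument is correct and is exactly the paper's proof: the paper also deduces the proposition immediately from Lemma~\ref{lem:complExt}, and your chain of inclusions $H_*(X,A) = \im p^F_* \subset \im p^G_* \subset H_*(X,A)$ simply makes explicit what the paper leaves implicit. Nothing is missing.
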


\begin{proof}
This result follows immediately from Lemma~\ref{lem:complExt}.
\end{proof}

\begin{remark}
\label{rem:complCont}
Since the projection from the graph of a continuous map
onto the domain of the map is a homeomorphism,
a continuous map is homologically complete.
\end{remark}

By Remark~\ref{rem:complCont} and Proposition~\ref{prop:complExt},
we immediately conclude the following:

\begin{corollary}
\label{cor:complCont}
If a closed correspondence $(F, F')$ has a continuous selector
then $(F, F')$ is homologically complete.
\end{corollary}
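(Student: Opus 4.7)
The plan is to combine Remark~\ref{rem:complCont} with Proposition~\ref{prop:complExt}, essentially using the selector as a ``core'' whose graph sits inside $(F,F')$ and which already carries all the homological completeness needed.

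First, I would unpack what it means for $(F,F')$ to have a continuous selector $f \colon (X,A)\to(Y,B)$. By the definition of selector given in Section~\ref{sec:prelim}, identifying $f$ with its graph $\Gamma_f \subset X\times Y$ (and $f|_A$ with $\Gamma_{f|_A}\subset A\times B$), the pair $(\Gamma_f,\Gamma_{f|_A})$ is itself a correspondence from $(X,A)$ to $(Y,B)$, and the enlargement condition $(\Gamma_f,\Gamma_{f|_A}) \subset (F,F')$ holds by hypothesis. Since $f$ is continuous (and the spaces involved are of the usual Hausdorff type considered throughout the paper), both $\Gamma_f$ and $\Gamma_{f|_A}$ are closed, so $(\Gamma_f,\Gamma_{f|_A})$ is a closed correspondence, which is the setting in which Proposition~\ref{prop:complExt} is formulated.

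Next, I would invoke Remark~\ref{rem:complCont} to conclude that the correspondence $(\Gamma_f,\Gamma_{f|_A})$ is homologically complete: the canonical projection from the graph of $f$ onto $(X,A)$ is a homeomorphism, hence induces an isomorphism on homology, which in particular is surjective. Thus the homological completeness Definition~\ref{def:complete} is satisfied for the selector viewed as a correspondence.

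Finally, I would apply Proposition~\ref{prop:complExt} to the inclusion $(\Gamma_f,\Gamma_{f|_A}) \subset (F,F')$ of closed correspondences, which immediately yields that $(F,F')$ is homologically complete, as desired. There is essentially no obstacle here; the only thing that needs a moment of care is verifying that the graph of the continuous selector and its restriction are closed so that Proposition~\ref{prop:complExt} genuinely applies, but this is standard under the topological hypotheses implicit in the paper.
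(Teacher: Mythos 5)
Your argument is exactly the paper's: the corollary is deduced by viewing the selector's graph as a closed correspondence that is homologically complete by Remark~\ref{rem:complCont}, and then applying Proposition~\ref{prop:complExt} to the enlargement $(F,F')$. The extra care you take about closedness of the graph is a fine (and standard) detail, but otherwise this is the same proof.
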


We now change our focus to $q_*$.

\begin{definition}
\label{def:consistent}
A closed correspondence $(F, F')$ from $(X, A)$ to $(Y, B)$
is \emph{homologically consistent} if $\ker p_* \subset \ker q_*$,
where $p \colon (F, F') \to (X, A)$ and $q \colon (F, F') \to (Y, B)$
are the natural projections.
\end{definition}

\begin{proposition}
\label{prop:consExt}
Let $(F, F') \subset (G, G')$ be closed correspondences
from $(X, A)$ to $(Y, B)$.
If $(G, G')$ is homologically consistent then so is $(F, F')$.
\end{proposition}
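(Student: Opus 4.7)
The plan is to mimic the strategy of Lemma~\ref{lem:complExt}, exploiting the fact that both projections from $(F,F')$ factor through the corresponding projections from $(G,G')$ via the inclusion $\iota \colon (F,F') \hookrightarrow (G,G')$. Write $p^F, q^F$ for the canonical projections on $(F,F')$ and $p^G, q^G$ for those on $(G,G')$. Then, directly from the definitions, one has the commutative identities $p^F = p^G \circ \iota$ and $q^F = q^G \circ \iota$ as maps of pairs.

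Applying the homology functor yields $p^F_* = p^G_* \circ \iota_*$ and $q^F_* = q^G_* \circ \iota_*$. With these factorizations in hand, the proof reduces to a one-line diagram chase. Take any class $\alpha \in \ker p^F_*$. Then $p^G_*(\iota_* \alpha) = p^F_*(\alpha) = 0$, so $\iota_* \alpha \in \ker p^G_*$. By the homological consistency of $(G,G')$, this forces $\iota_* \alpha \in \ker q^G_*$, whence $q^F_*(\alpha) = q^G_*(\iota_* \alpha) = 0$, i.e.\ $\alpha \in \ker q^F_*$. This establishes $\ker p^F_* \subset \ker q^F_*$, which is exactly homological consistency of $(F,F')$.

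There is no real obstacle here: the statement is essentially a formal consequence of the functoriality of homology applied to the commuting triangles formed by $\iota$ together with the canonical projections. The only thing worth flagging is that one must treat $p$ and $q$ as maps of pairs (so that they induce maps on relative homology), but this is already built into Definition~\ref{def:homF} and the setup of the correspondence. In fact, the whole argument parallels Lemma~\ref{lem:complExt} so closely that I would present it in essentially the same two-line style, perhaps even citing that lemma for the factorization and then noting that the kernel inclusion is immediate.
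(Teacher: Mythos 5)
Your argument is correct and is essentially identical to the paper's proof: both factor the projections through the inclusion $\iota$ as $p^F = p^G \circ \iota$, $q^F = q^G \circ \iota$, push a class in $\ker p^F_*$ forward via $\iota_*$, and invoke the homological consistency of $(G, G')$ to conclude. No gaps.
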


\begin{proof}
Let $p^F, p^G, q^F, q^G$ be the respective projections.
Consider the inclusion $\iota \colon (F, F') \to (G, G')$.
Note that $p^F = p^G \circ \iota$ and $q^F = q^G \circ \iota$.
Take any $c \in H_* (F, F')$ such that $p^F_* (c) = 0$.
Define $c' := \iota_* (c)$.
Then $p^G_* (c') = 0$.
Since $(G, G')$ is homologically consistent, $q^G_* (c') = 0$,
and thus $q^F_* (c) = 0$.
Since the choice of $c$ was arbitrary,
this reasoning shows that $(F, F')$ is homologically consistent.
\end{proof}

\begin{remark}
\label{rem:corrAcycl}
If $p_*$ is an isomorphism then $(F, F')$
is obviously homologically consistent.
In particular, if $(F, F')$ is a continuous map,
or $X$ and $Y$ are compact metric spaces
and $(F, F')$ is an acyclic upper semicontinuous multivalued map,
then $(F, F')$ is homologically consistent.
However, it is not enough for a multivalued map
to have a continuous selector
to be homologically consistent (see Example~\ref{ex:wrongMap}).
\end{remark}

\begin{corollary}
\label{cor:corrAcycl}
If $Y$ is compact and a closed correspondence $(F, F')$
from $(X, A)$ to $(Y, B)$ has an enlargement
that is an acyclic upper semicontinuous multivalued map
then $(F, F')$ is homologically consistent.
\end{corollary}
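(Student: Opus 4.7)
The plan is to reduce the corollary to Proposition~\ref{prop:consExt} by showing that the acyclic upper semicontinuous enlargement is itself homologically consistent. Write $(G, G')$ for the given enlargement of $(F, F')$, and let $p^G, q^G$ denote its canonical projections.

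First I would verify that $(G, G')$ is a closed correspondence. Since an acyclic set is in particular nonempty and closed (its reduced homology vanishes, which in the compact-metric/cubical context used in the paper implies closedness of the values $G(x)$ and $G'(a)$), and since the domains of $G$ and $G'$ are the whole of $X$ and $A$ (because $(G, G')$ is a multivalued map), and the range $Y$ is compact by hypothesis, Theorem~\ref{thm:closed} applies to both $G$ and $G'$: their upper semicontinuity forces them to be closed as subsets of $X \times Y$ and $A \times B$. Thus $(G, G')$ is a closed correspondence and Proposition~\ref{prop:consExt} is applicable once we show $(G, G')$ is homologically consistent.

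Next I would invoke the Vietoris mapping theorem (Theorem~\ref{thm:mapping}) on $p^G$. The fibers $(p^G)^{-1}(x) = \{x\} \times G(x)$ are acyclic by assumption, and likewise $(p^G)^{-1}(a) = \{a\} \times G'(a)$ is acyclic for $a \in A$. Applying the absolute Vietoris theorem separately to $p^G \colon G \to X$ and to its restriction $G' \to A$, then comparing the long exact sequences of the pairs $(G, G')$ and $(X, A)$ via the Five Lemma, yields that $p^G_*$ is an isomorphism on the relative homology $H_*(G, G') \to H_*(X, A)$. In particular $\ker p^G_* = 0$, so trivially $\ker p^G_* \subset \ker q^G_*$, and $(G, G')$ is homologically consistent in the sense of Definition~\ref{def:consistent} (this is exactly the observation recorded in Remark~\ref{rem:corrAcycl}). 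An application of Proposition~\ref{prop:consExt} to the inclusion $(F, F') \subset (G, G')$ then finishes the proof.

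The main obstacle is Step 2: deducing that $p^G_*$ is an isomorphism on the pair from the absolute Vietoris theorem. This requires that the underlying spaces be suitable (e.g.\ compact metric, as stated in Theorem~\ref{thm:mapping}) so that the classical Vietoris--Begle result applies, and requires a Five-Lemma argument on the pair's long exact sequences rather than a direct citation. The closure verification in Step 1 and the final invocation of Proposition~\ref{prop:consExt} are routine once Step 2 is in place.
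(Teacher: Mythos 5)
Your argument is correct and follows essentially the route the paper intends: the enlargement is closed via Theorem~\ref{thm:closed}, its projection induces an isomorphism by the Vietoris argument recorded in Remark~\ref{rem:corrAcycl} (so the enlargement is homologically consistent), and Proposition~\ref{prop:consExt} passes consistency down to $(F, F')$. The caveats you flag yourself --- that acyclicity of the values does not by itself guarantee they are closed, and that Theorem~\ref{thm:mapping} needs the compact metric setting --- correspond to hypotheses the paper leaves implicit rather than to a gap in your reasoning.
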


The following result justifies the usefulness of Definition~\ref{def:homF}.
See Figure~\ref{fig:graphFG} for a suggestive illustration of the set-up.

\begin{theorem}
\label{thm:homF}
Let $f \colon (X, A) \to (Y, B)$ be a continuous selector
of a closed correspondence $(F, F')$ from $(X, A)$ to $(Y, B)$.
If $(F, F')$ is homologically consistent then $f_* = (F, F')_*$.
\end{theorem}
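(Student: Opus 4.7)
The plan is to unwind Definition~\ref{def:homF} using the two hypotheses to show that both quotients appearing there collapse, and then use the graph of the selector $f$ as a concrete preimage under $p_*$. Since $f$ is a continuous selector, Corollary~\ref{cor:complCont} tells me that $(F,F')$ is homologically complete, i.e.\ $p_*$ is surjective, so $\im p_* = H_*(X,A)$. Homological consistency gives $\ker p_* \subset \ker q_*$, hence $q_*(\ker p_*) = 0$ and $H_*(Y,B)/q_*(\ker p_*) = H_*(Y,B)$. Consequently, $(F,F')_*$ is an honest homomorphism $H_*(X,A) \to H_*(Y,B)$, so the statement $f_* = (F,F')_*$ at least makes literal sense.

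Next I would exploit that $f$ is a selector in order to construct preimages under $p_*$. Let $\Gamma_f \subset F$ and $\Gamma_{f|A} \subset F'$ be the graphs of $f$ and of $f|_A$. Since $f$ is continuous and $(F,F')$ contains these graphs (as $f$ is a selector), the inclusion $\iota\colon (\Gamma_f,\Gamma_{f|A}) \hookrightarrow (F,F')$ is well defined. Denote by $p_f$ and $q_f$ the two canonical projections from the graph; the key elementary observation is that $p_f$ is a homeomorphism (being a graph of a function from $X$), while $q_f = f \circ p_f$. By construction, $p \circ \iota = p_f$ and $q \circ \iota = q_f$, so after applying the homology functor $p_* \circ \iota_* = (p_f)_*$ and $q_* \circ \iota_* = (q_f)_* = f_* \circ (p_f)_*$.

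Now I would chase a class around. Fix $\alpha \in H_*(X,A)$ and set $\beta := \iota_*\bigl((p_f)_*^{-1}(\alpha)\bigr) \in H_*(F,F')$. Then $p_*(\beta) = (p_f)_*\bigl((p_f)_*^{-1}(\alpha)\bigr) = \alpha$, so $\widetilde{p_*}^{-1}(\alpha)$ is the class $[\beta]$ in $H_*(F,F')/\ker p_*$. Applying $\widetilde{q_*}$ (which, after the simplifications in the first paragraph, is just $q_*$ read off on cosets of $\ker p_*$) yields $(F,F')_*(\alpha) = q_*(\beta) = f_* \circ (p_f)_*\bigl((p_f)_*^{-1}(\alpha)\bigr) = f_*(\alpha)$, which is the required equality.

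There is no genuine obstacle once the two hypotheses have been used to trivialize the quotients in Definition~\ref{def:homF}. The only thing to be careful about is making sure that the identifications $H_*(F,F')/\ker p_* \cong H_*(X,A)$ and $H_*(Y,B)/q_*(\ker p_*) \cong H_*(Y,B)$ are used consistently, so that the diagram chase really does produce $f_*(\alpha)$ rather than merely a coset containing it; that is precisely where homological consistency does the work.
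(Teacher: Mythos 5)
Your proof is correct and follows essentially the same route as the paper: both use the inclusion of the graph of the selector $(\Gamma_f,\Gamma_{f|A})\hookrightarrow(F,F')$, the fact that its projection to $(X,A)$ is a homeomorphism, and consistency to make $\widetilde{q_*}$ act as $q_*$ on cosets of $\ker p_*$. The only difference is cosmetic—you chase an element $\alpha$ while the paper writes the same argument as an identity of composed homomorphisms.
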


\begin{proof}
By Corollary~\ref{cor:complCont},
the fact that $(F, F')$ has a continuous selector
implies that the correspondence $(F, F')$ is homologically complete,
and thus $\im p_* = H_* (X, A)$.
Since $(F, F')$ is homologically consistent, $\ker p_* \subset \ker q_*$,
and thus $q_* (\ker p_*) = 0$.
Therefore, $H_* (Y) / q_* (\ker p_*) \cong H_* (Y)$.

Let us prove that
$f_* = \widetilde{q_*} \circ (\widetilde{p_*})^{-1}$.
Let $j \colon (f, f|_A) \to (F, F')$ denote the inclusion.
Since $(f, f|_A)$ is homeomorphic with
$(X, A)$ by the natural projection $p \circ j$,
this homeomorphism induces the isomorphism
$(p \circ j)_* = p_* \circ j_*$ in homology.
The following diagram may help following the remaining part of the proof.
\[
\qquad\qquad
\begin{tikzcd}
& H_* (Y, B) \\
H_* (f, f|_A)
\arrow{r}{j_*} \arrow{ur}{(q \circ j)_*}
\arrow{dr}{\cong}[swap]{(p \circ j)_*} &
H_* (F, F')
\arrow{r}{\pi} \arrow{u}[swap]{q_*} \arrow{d}{p_*}&
H_* (F, F') / \ker p_*
\arrow{lu}[swap]{\widetilde{q_*}}
\arrow{ld}{\widetilde{p_*}}[swap]{\cong} \\
& H_* (X, A)
\arrow[end anchor=east,controls={+(6,0.5) and +(6,-0.5)}]{uu}[swap]{f_*}
\end{tikzcd}
\]
Let
$\pi \colon H_* (F, F') \to H_* (F, F') / \ker p_*$
denote the natural projection (which is an epimorphism).
Then obviously $p_* = \widetilde{p_*} \circ \pi$.
As a consequence,
\[
(p \circ j)_* = p_* \circ j_* = (\widetilde{p_*} \circ \pi) \circ j_* =
\widetilde{p_*} \circ (\pi \circ j_*),
\]
and thus
\[
(\widetilde{p_*})^{-1} = (\pi \circ j_*) \circ ((p \circ j)_*)^{-1}.
\]
The assumption that $\ker p_* \subset \ker q_*$
implies that $q_* = \widetilde{q_*} \circ \pi$
(because $\ker \pi = \ker p_*$).
Therefore,
\begin{multline*}
\widetilde{q_*} \circ (\widetilde{p_*})^{-1} =
\widetilde{q_*} \circ (\pi \circ j_*) \circ ((p \circ j)_*)^{-1} = \\
= ((\widetilde{q_*} \circ \pi) \circ j_*) \circ ((p \circ j)_*)^{-1} =
(q_* \circ j_*) \circ ((p \circ j)_*)^{-1} = \\
(q \circ j)_* \circ ((p \circ j)_*)^{-1},
\end{multline*}
and this coincides with $f_*$ (see Remark~\ref{rem:homFcont}),
which completes the proof.
\end{proof}

%%%%%%%%%%%%%%%%%%%%%%%%%%%%%%%%%%%%%%%%%%%%%%%%%%%%%

\section{Extensions of a correspondence}
\label{sec:extensions}

Applications such as reconstructing the graph of a map from time series
or using a finite sample, possibly with some noise,
to determine the graph of a map give rise to missing or perturbed data.
Thus the observed correspondence $(F, F')$ may differ
from the unknown true correspondence $(G, G')$.
We show that it is possible to retrieve some homological information
about $(G, G')$ from $(F, F')$.

%====================================================

\subsection{Homological extension and homologically consistent enlargment}
\label{sec:homExt}

A homological extension is meant to serve as a replacement
of a correspondence that extends the original one
at the homological level.
A homologically consistent enlargement is supposed to be
a correction of a correspondence
aimed at fixing the problem of missing data. More specifically:

\begin{definition}
\label{def:homMatching}
Let $(F, F')$ and $(G, G')$ be closed correspondences
from $(X, A)$ to $(Y, B)$.
Let $p^F$, $p^G$, $q^F$, and $q^G$ denote the respective projections.
We say that \emph{$(G, G')$ is a homological extension of $(F, F')$}
if the following conditions hold:
\begin{enumerate}
\item\label{homMatching1}
$\im p^F_* \subset \im p^G_*$,
which yields the natural inclusion homomorphism
\[
i \colon \im p^F_* \to \im p^G_*;
\]
\item\label{homMatching2}
$q^G_* (\ker p^G_*) \subset q^F_* (\ker p^F_*)$,
which provides the natural projection homomorphism
\[
j \colon H_* (Y, B) / q^G_* (\ker p^G_*)
\to H_* (Y, B) / q^F_* (\ker p^F_*);
\]
\item\label{homMatching3}
$(F, F')_* = j \circ (G, G')_* \circ i$.
\end{enumerate}
\end{definition}

The name ``extension'' is motivated by the fact
that the homomorphism $j \circ (G, G')_*$
restricted to the domain of $(F, F')_*$
actually equals $(F, F')_*$,
so indeed $(G, G')_*$ is an extension of $(F, F')_*$
up to an isomorphism.
Moreover, the correspondence $(G, G')$
carries possibly more homological information than $(F, F')$.
Indeed, $(F, F')_*$ can be recovered from $(G, G')_*$
by applying the maps $i$ and $j$.

\begin{remark}
\label{rem:homOrder}
The relation of being a homological extension
defines a partial order on closed correspondences from $(X, A)$ to $(Y, B)$.
\end{remark}

It is worth to mention that even if $(G, G')$
is a homological extension of $(F, F')$
then it is not necessary that $(F, F') \subset (G, G')$,
nor that $(G, G') \subset (F, F')$.
However, a converse holds true to certain extent:

\begin{proposition}
\label{prop:dataExt}
A homologically consistent enlargement is a homological extension.
\end{proposition}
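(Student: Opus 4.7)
The plan is to verify the three conditions of Definition~\ref{def:homMatching} directly, exploiting homological consistency to trivialize the target quotients.

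First, I would invoke Proposition~\ref{prop:consExt} to conclude that $(F, F')$ inherits homological consistency from the enlargement $(G, G')$. Consequently, both $q^F_*(\ker p^F_*)$ and $q^G_*(\ker p^G_*)$ vanish. This immediately verifies condition~(\ref{homMatching2}) of Definition~\ref{def:homMatching}, identifies each quotient $H_*(Y, B)/q_*(\ker p_*)$ with $H_*(Y, B)$, and makes the projection $j$ the identity. Condition~(\ref{homMatching1}) is then handed to me by Lemma~\ref{lem:complExt}, supplying the natural inclusion $i \colon \im p^F_* \hookrightarrow \im p^G_*$.

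The substantive step is condition~(\ref{homMatching3}). I would trace a class $\alpha \in \im p^F_*$ through the diagram by selecting a lift $c \in H_*(F, F')$ with $p^F_*(c) = \alpha$ and pushing it along the inclusion $\iota \colon (F, F') \to (G, G')$. The relations $p^G \circ \iota = p^F$ and $q^G \circ \iota = q^F$, applied to $\iota_*(c)$, show simultaneously that $\iota_*(c)$ is a preimage of $\alpha$ under $p^G_*$ and that $q^G_*(\iota_*(c)) = q^F_*(c)$. Combined with consistency, which makes $q_*$ descend through the quotient by $\ker p_*$ and yields $(F, F')_*(\alpha) = q^F_*(c)$ and $(G, G')_*(i(\alpha)) = q^G_*(\iota_*(c))$, this gives $(F, F')_*(\alpha) = (G, G')_*(i(\alpha))$. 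Since $j$ is the identity, condition~(\ref{homMatching3}) follows.

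There is no genuine obstacle; the one detail requiring care is the independence of $q^F_*(c)$ from the chosen lift $c$, which is precisely the content of $\ker p^F_* \subset \ker q^F_*$. Otherwise, the argument reduces to a diagram chase through the homomorphisms induced by the inclusion $\iota$.
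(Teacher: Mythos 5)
Your proof is correct and follows essentially the same route as the paper: Lemma~\ref{lem:complExt} gives condition \eqref{homMatching1}, the vanishing of $q^G_*(\ker p^G_*)$ gives \eqref{homMatching2}, and condition \eqref{homMatching3} is the same chase through $\iota_*$ using $p^F = p^G \circ \iota$ and $q^F = q^G \circ \iota$. The only cosmetic difference is that you invoke Proposition~\ref{prop:consExt} to trivialize the quotient $H_*(Y,B)/q^F_*(\ker p^F_*)$ and make $j$ the identity, whereas the paper keeps the coset $q^F_*(\ker p^F_*)$ in place and verifies the same identity directly; both are valid.
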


\begin{proof}
Let $(G, G')$ be a homologically consistent enlargement of $(F, F')$.
We show that $(G, G')$ is a homological extension of $(F, F')$.
For that purpose, we show that conditions
\eqref{homMatching1}--\eqref{homMatching3}
of Definition~\ref{def:homMatching}
are satisfied.
\eqref{homMatching1} By Lemma~\ref{lem:complExt},
$\im p^F_* \subset \im p^G_*$.
\eqref{homMatching2} Since $(G, G')$ is homologically consistent,
$q^G_* (\ker p^G_*) = 0 \subset q^F_* (\ker p^F_*)$.
\eqref{homMatching1}
Suppose $F_* (x) = y$.
Let $\iota \colon (F, F') \to (G, G')$ be the inclusion map.
We show $( j \circ (G, G')_* \circ i) ( x ) = y$.
By definition of $(F, F')_* (x) = y$, there exists $c \in H_* (F, F')$
such that $p^F_* (c) = x$ and $y = q^F_* (c) + q^F_* (\ker p^F_*)$.
Consider $c' := \iota_* (c) \in H_* (G, G')$. By definition,
\[
(G, G')_*( p^G_* (c')) = q^G_* (c') + q^G_* (\ker p^G_*).
\]
Note that $p^F_* (c) = p^G_* (c') = x$.
Similarly, $q^F_* (c) = q^G_* (c')$.
Hence
\begin{multline*}
(j \circ (G, G')_* \circ i) (x) = (j \circ (G, G')_*) (x) =
j (q^G_* (c') + q^G_* (\ker p^G_*)) = \\
= j ( q^F_* (c) + q^G_* (\ker p^G_*)) =
q^F_* (c) + q^G_* (\ker p^G_*) + q^F_* (\ker q^F_*).
\end{multline*}
Since $0 = q^G_* (\ker p^G_*) \subset q^F_* (\ker q^F_*)$,
we conclude that $(j \circ (G, G')_* \circ i) (x) =
q^F_*(c) + q^F_* (\ker q^F_*) = y$, as desired.
\end{proof}

\begin{remark}
\label{rem:missing}
Proposition~\ref{prop:dataExt} shows that
we can learn partial or total information about the homomorphism
induced in homology by an unknown homologically consistent map
even when some data is missing.
In particular, if $(F, F')_*$ is nontrivial then the same holds true
for any of its homologically consistent enlargements.
\end{remark}

\begin{remark}
\label{rem:dataExt}
As an immediate consequence of Proposition~\ref{prop:consExt},
if there exists a homologically consistent enlargement
of a correspondence $(F, F')$
then $(F, F')$ is homologically consistent.
Note that by Proposition~\ref{prop:complExt},
homological completeness carries over to enlargements;
therefore, if $(F, F')$ is homologically complete
then so are any of its homologically consistent enlargements.
\end{remark}

The concept of homologically consistent enlargements
allows us to give the following generalization of Theorem~\ref{thm:homF}:

\begin{theorem} \label{thm:combinedtheorem}
Let $f \colon (X, A) \to (Y, B)$ be a continuous selector
of a homologically consistent enlargement $(G, G')$
of a closed correspondence $(F, F')$ from $(X, A)$ to $(Y, B)$.
If $(F, F')$ is homologically complete then $f_* = (F, F')_*$.
\end{theorem}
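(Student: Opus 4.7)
The plan is to reduce the statement to Theorem~\ref{thm:homF} applied to the enlargement $(G, G')$, then bridge from $(G, G')_*$ down to $(F, F')_*$ using Proposition~\ref{prop:dataExt}. The only remaining work is to verify that the comparison homomorphisms $i$ and $j$ appearing in Definition~\ref{def:homMatching} collapse to identity maps under the given hypotheses, so that $(F, F')_* = (G, G')_*$.

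First, I would apply Theorem~\ref{thm:homF} to $(G, G')$: since $f$ is a continuous selector of $(G, G')$ and $(G, G')$ is homologically consistent, we immediately obtain $f_* = (G, G')_*$. Next, Proposition~\ref{prop:dataExt} gives that $(G, G')$ is a homological extension of $(F, F')$, so by Definition~\ref{def:homMatching}, we have $(F, F')_* = j \circ (G, G')_* \circ i$. To show $i$ is an identity, note that by hypothesis $(F, F')$ is homologically complete, so $\im p^F_* = H_* (X, A)$; combined with $\im p^F_* \subset \im p^G_* \subset H_* (X, A)$ (via Lemma~\ref{lem:complExt}), this forces $\im p^G_* = H_* (X, A)$ as well, so $i$ is the identity on $H_* (X, A)$. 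To show $j$ is an identity, I would invoke Proposition~\ref{prop:consExt} to see that $(F, F')$ inherits homological consistency from $(G, G')$, so $q^F_* (\ker p^F_*) = 0 = q^G_* (\ker p^G_*)$ and both quotient spaces collapse canonically to $H_* (Y, B)$. Combining the three ingredients yields $f_* = (G, G')_* = (F, F')_*$.

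I do not anticipate a genuine obstacle: all ingredients have been established in the preceding sections, and the argument is essentially a bookkeeping exercise verifying which kernels and images trivialize. The only easy-to-miss step is observing that homological consistency of $(F, F')$ itself must be invoked (via Proposition~\ref{prop:consExt}) in order to trivialize the codomain side of $j$, since consistency of the \emph{enlargement} $(G, G')$ alone only tells us that $q^G_* (\ker p^G_*) = 0$.
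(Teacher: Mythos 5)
Your proposal is correct and follows essentially the same route as the paper: apply Theorem~\ref{thm:homF} to the enlargement $(G,G')$, invoke Proposition~\ref{prop:dataExt} to view $(G,G')$ as a homological extension of $(F,F')$, and then use homological completeness of $(F,F')$ together with Proposition~\ref{prop:consExt} to see that the maps $i$ and $j$ of Definition~\ref{def:homMatching} are identities. Your write-up even spells out slightly more detail (e.g.\ why $\im p^G_* = H_*(X,A)$) than the paper's proof does.
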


\begin{proof}
By Theorem \ref{thm:homF},
we have $f_* = (G, G')_*$. Via Proposition~\ref{prop:dataExt},
$(G, G')$ is a homological extension of $(F, F')$.
By homological completeness of $(F,F')$,
the map $i$ of Definition~\ref{def:homMatching}
is identity. By Proposition~\ref{prop:consExt},
$(F,F')$ is homologically consistent,
and the map $j$ of Definition \ref{def:homMatching}
is identity as well. It follows from property (3)
of Definition~\ref{def:homMatching} that $(F, F')_* = (G,G')* = f_*$.
\end{proof}

%====================================================

\subsection{Existence of an acyclic enlargement}
\label{sec:acyclExt}

We prove that if the images of a closed correspondence
are small enough then the correspondence has an acyclic enlargement,
that is, an enlargement which is an acyclic multivalued map;
see Theorem~\ref{thm:extAcycl} below.
The required size of the images is provided explicitly
if we know a Lebesgue number of a cover of $Y$
that satisfies certain conditions.
In particular, this result provides an explicit sufficient condition
on how tight an approximation of an unknown continuous map must be
in order to be sure that it provides meaningful homological information.

\begin{definition}
\label{def:closedCover}
A collection $\cU$ of closed subsets of a metric space $Y$
is called a \emph{closed cover} of $Y$ if the interiors
of the sets in $\cU$ form an open cover of $Y$.
\end{definition}

%The following definition is inspired by \cite[pg. 42]{BT95}.
\begin{definition}
\label{def:goodCover}
A closed cover $\cU$ of a metric space $Y$
is called a \emph{good closed cover} of $Y$
if the intersection of any finite collection
of elements of $\cU$ is either empty or acyclic.
\end{definition}

\begin{remark}
\label{rem:finGoodCover}
If $\cU$ is a good closed cover of $Y$ then any sub-cover $\cV$ of $Y$
(that is, a closed cover $\cV$ of $Y$ such that $\cV \subset \cU$)
is a good closed cover of $Y$, too.
In particular, if $Y$ is compact and has a good closed cover $\cU$
then it has a finite good closed cover.
(The finite sub-cover of $\cU$ that exists
by the compactness of $Y$ is a good choice.)
\end{remark}

A minor modification of the proof conducted in \cite[pg. 42--43]{BT95}
for the case of an open cover, allows us to claim the following.

\begin{proposition}
\label{prop:strAcycl}
Every second countable Hausdorff manifold has a good closed cover.
\end{proposition}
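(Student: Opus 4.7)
The plan is to adapt the Bott--Tu construction of a good open cover, swapping open geodesically convex balls for their closed counterparts. The main ingredients are a Riemannian metric on $M$ (available after endowing $M$ with a smooth structure; the second countable Hausdorff hypothesis ensures paracompactness, hence existence of such a metric) and Whitehead's theorem on convexity radii: for each $p \in M$ there is a radius $r_p > 0$ such that for every $r \leq r_p$ the exponential map at $p$ is a diffeomorphism onto the closed geodesic ball $\overline{B}(p, r)$, and this ball is geodesically convex.

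For each $p \in M$, set $C_p := \overline{B}(p, r_p/2)$. Then $p \in B(p, r_p/2) \subset \interior C_p$, so $\{C_p : p \in M\}$ satisfies the hypothesis of Definition~\ref{def:closedCover}. Second countability makes $M$ Lindel\"of, so the open cover $\{B(p, r_p/2) : p \in M\}$ has a countable subcover, and the corresponding closed sets $\cU := \{C_{p_i}\}_{i \geq 1}$ form a countable closed cover of $M$. To verify that $\cU$ is good, take a non-empty finite intersection $K := C_{p_{i_1}} \cap \cdots \cap C_{p_{i_k}}$. Since each $C_{p_{i_j}}$ is geodesically convex, the unique minimizing geodesic joining any two points of $K$ lies in every $C_{p_{i_j}}$, and hence in $K$; thus $K$ is itself geodesically convex. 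For any $x \in K$, contracting along the minimizing geodesics to $x$ (which depend continuously on their endpoints) produces a deformation retraction of $K$ onto $\{x\}$, so $K$ is contractible and therefore acyclic.

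The main obstacle, and the reason this truly is only a minor modification of the Bott--Tu proof, is the passage from open to closed balls. Whitehead's theorem is standardly stated for open balls, so one must verify that $\overline{B}(p, r_p/2)$ is itself geodesically convex, and that the minimizing geodesic between two points of a finite intersection of closed convex balls stays inside each of them. Both facts follow by a continuity argument from the open version of the theorem applied at a slightly larger radius.
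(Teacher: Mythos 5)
Your proof is correct and takes essentially the same route as the paper, whose entire argument is the citation of the Bott--Tu good open cover construction (geodesically convex balls from a Riemannian metric) together with the remark that a minor modification yields the closed-ball version; you have simply carried out that modification explicitly, including the convexity and contractibility of finite intersections of closed convex balls. The only caveat, shared equally by the paper's appeal to Bott--Tu, is that the argument implicitly treats ``manifold'' as smooth (or at least smoothable), since the Riemannian metric and Whitehead's convexity theorem require a smooth structure.
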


\begin{definition}
\label{def:inscribed}
We say that a relation $F \subset X \times Y$
is \emph{inscribed} into a closed cover $\cU$ of $Y$
if for every $x \in X$ there exists $U \in \cU$ such that
$F (x) \subset \interior U$.
We say that a correspondence $(F, F')$ is \emph{inscribed}
into a closed cover $\cU$ of $Y$ if $F$ is inscribed into $\cU$.
(Note that then obviously $F' \subset F$ is also inscribed into $\cU$.)
\end{definition}

\begin{remark}
\label{rem:Lebesgue}
If $Y$ is a compact metric space
then it follows from the Lebesgue's number lemma
that if the diameters of $F (x)$ are bounded
by a Lebesgue's number of the open cover
$\cU' := \{\interior U : U \in \cU\}$
then $F$ is inscribed into $\cU$.
\end{remark}

\begin{theorem}
\label{thm:extAcycl}
Let $X$ and $Y$ be compact metric spaces.
Assume that $Y$ has a finite good closed cover $\cU$.
Let $A \subset X$ be closed,
and take $B := \bigcup \cU'$ for some $\cU' \subset \cU$.
Let $(F, F')$ be an upper semicontinuous multivalued map
from $(X, A)$ to $(Y, B)$ inscribed into $\cU$.
Then $(F, F')$ has an acyclic enlargement.
\end{theorem}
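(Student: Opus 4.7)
The plan is to build the enlargement $(G,G')$ pointwise by the following \emph{intersection construction}: at each point, take the intersection of all cover elements whose interior contains the image. More precisely, set
\[
\cU_F(x) := \setof{U \in \cU : F(x) \subset \interior U}, \qquad G(x) := \bigcap \cU_F(x),
\]
and analogously $G'(a) := \bigcap \cU'_{F'}(a)$ where $\cU'_{F'}(a) := \setof{U \in \cU' : F'(a) \subset \interior_B U}$, viewing $\cU'$ as a closed cover of $B$.

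First I would check that $G$ is an acyclic, closed, upper semicontinuous multivalued map enlarging $F$. The inscribed hypothesis gives $\cU_F(x) \neq \emptyset$, and since $\cU$ is finite, $G(x)$ is a finite intersection of elements of the good closed cover. It is non-empty because $F(x) \subset \interior U \subset U$ for every $U \in \cU_F(x)$ forces $F(x) \subset G(x)$; so $G(x)$ is acyclic by the good-cover hypothesis, and the enlargement property holds. For upper semicontinuity, note that for each $U \in \cU$ the set $\setof{x \in X : F(x) \subset \interior U}$ is open in $X$ by upper semicontinuity of $F$ (apply the definition to $V := \interior U$). Therefore, given $x_0$, the finite intersection of these sets over $U \in \cU_F(x_0)$ is an open neighborhood $W$ of $x_0$ on which $\cU_F(x) \supset \cU_F(x_0)$, hence $G(x) \subset G(x_0)$. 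For any neighborhood $V$ of $G(x_0)$ this yields $G(W) \subset V$, which is precisely upper semicontinuity. Closedness of $G$ as a subset of $X \times Y$ then follows from Theorem~\ref{thm:closed}, since $G$ has closed values and $X,Y$ are compact.

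Next I would observe that $\cU'$ itself is a good closed cover of $B$: its elements are closed in $B$; any finite intersection is a finite intersection of elements of $\cU$, hence acyclic or empty by the good-cover hypothesis; and their $B$-interiors cover $B$ because $B = \bigcup \cU'$ and the closed-cover property of $\cU$ in $Y$ descends to $B$. The same three-step argument as above, applied to $F'$ and $\cU'$, then yields a closed, acyclic, upper semicontinuous multivalued map $G'$ from $A$ to $B$ enlarging $F'$. Finally, for $(G,G')$ to be a correspondence one needs $G' \subset G$; if this does not hold automatically (every $U \in \cU'_{F'}(a)$ is also in $\cU$ and contains $F'(a)$ in its $Y$-interior, so it frequently belongs to $\cU_F(a)$), replacing $G'(a)$ by $G'(a) \cap G(a)$ enforces the inclusion, and acyclicity is preserved because the replacement is still a non-empty finite intersection of $\cU$-elements.

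The main obstacle is the bridging between the ambient construction on $(X,Y)$ and the subspace construction on $(A,B)$: one must carefully verify that $\cU'$ behaves as a good closed cover of $B$ (in particular, that $B$-interiors of $\cU'$-elements cover $B$) and that the inscribed property of $F'$, inherited from $(F,F')$ being inscribed into $\cU$, produces a non-empty $\cU'_{F'}(a)$ for every $a \in A$. This is precisely where the hypothesis $B = \bigcup \cU'$ with $\cU' \subset \cU$ does its work, allowing the subspace construction to mirror the ambient one while keeping the values of $G'$ inside~$B$.
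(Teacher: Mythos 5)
Your construction of $G$ is exactly the paper's: the sets $W_U:=\setof{x\in X: F(x)\subset\interior U}$ are open by upper semicontinuity and cover $X$ by the inscribed hypothesis, and $G(x):=\bigcap\setof{U\in\cU: x\in W_U}$ is a nonempty finite intersection of elements of the good cover containing $F(x)$, hence acyclic; your extra verifications of upper semicontinuity and closedness of $G$ are correct (the paper only records that the $W_U$ are open). The genuine gap is in your treatment of $G'$. The paper simply sets $G':=G|_A$, whereas you rerun the construction inside $B$ using $\cU'$, and the two claims this rests on are false in general. First, the $B$-interiors of the elements of $\cU'$ need not cover $B$: take $Y=[0,3]$ with the good closed cover $\cU=\setof{[0,1],[\tfrac12,\tfrac32],[1,2],[\tfrac32,\tfrac52],[2,3]}$ and $\cU'=\setof{[0,1],[1,2]}$, so $B=[0,2]$; the point $1\in B$ lies in neither $\interior_B[0,1]=[0,1)$ nor $\interior_B[1,2]=(1,2]$, because $1$ is interior only to $[\tfrac12,\tfrac32]\notin\cU'$. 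Second, and consequently, $F'$ need not be inscribed into $\cU'$: with $X=A$ a single point $a$ and $F(a)=F'(a)=\setof{1}$, the map is upper semicontinuous, inscribed into $\cU$ via $[\tfrac12,\tfrac32]$, and satisfies $F'(a)\subset B$, yet $\cU'_{F'}(a)=\emptyset$. Then $G'(a)$ is an intersection over the empty family --- undefined, or equal to $B$ by convention --- and $B$ need not be acyclic (wrap the same local picture around a circle), so the good-cover argument no longer applies and your fallback $G'(a)\cap G(a)$ is not the ``non-empty finite intersection of $\cU$-elements'' you claim, since its justification presupposed a nonempty index family. In short, the step you yourself single out as the main obstacle is exactly where the argument breaks; the hypothesis $B=\bigcup\cU'$ does not do the work you ascribe to it.

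The repair is to do what the paper does: abandon the subspace construction and define $G':=G|_A$, whose values are the same nonempty finite intersections of elements of $\cU$ (hence acyclic) and which contains $F'$ because $F'\subset F\subset G$. Your underlying concern --- that the values of $G'$ over $A$ should land in $B$ --- is a legitimate one, but your intersection-in-$B$ construction does not secure it in general either (it may simply be undefined), and the paper's proof does not attempt to force it; it is not something that can be obtained merely by replacing $\cU$ with $\cU'$ in the construction.
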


\begin{proof}
Denote the elements of $\cU$ by $U_1, \ldots, U_r$.
For each $i \in \{1, \ldots, r\}$,
define $W_i := \{x \in X : F (x) \subset \interior U_i\}$.
Since $F$ is inscribed into $\cU$,
the collection $\{W_1, \ldots, W_r\}$ is a cover of $X$.
By semicontinuity, the sets $W_i$ are open.
Define $G (x) := \bigcap \{U_i \in \cU : x \in W_i \}$
and $G' := G|_A$.
Obviously, $(F, F') \subset (G, G')$.
Moreover, $(G, G')$ is acyclic, because $\cU$ is a good closed cover.
\end{proof}

\begin{remark}
Let $(G, G')$ be an acyclic enlargement of $(F, F')$
in Theorem~\ref{thm:extAcycl}.
Then by Vietoris-Begle Mapping Theorem~\ref{thm:mapping},
the projection $p_G \colon (G, G') \to (X, A)$
induces an isomorphism in homology.
Note that it follows that the correspondence $(G, G')$
is homologically consistent.
\end{remark}

%%%%%%%%%%%%%%%%%%%%%%%%%%%%%%%%%%%%%%%%%%%%%%%%%%%%%

\section{Combinatorial maps and examples}
\label{sec:comb}

When using computational methods
for the determination of the homomorphism induced in homology,
one may represent a correspondence
by means of a finite combinatorial structure.
We provide the essential definitions and then provide a series of examples.

%====================================================

\subsection{Preliminaries}
\label{sec:combPrelim}

%{\bf Grid.}
A \emph{grid} $\cX$ for a compact set $X$
is a finite collection of regular compact subsets of $X$
with disjoint interiors such that
$\bigcup \cX = \bigcup_{\xi \in \cX} \xi = X$.
The \emph{geometric realization} of a set $\cU \subset \cX$
is $|\cU| := \bigcup \cU = \bigcup_{\xi \in \cU} \xi$.

%{\bf Combinatorial map.}
Let $\cX$ be a grid in $X$ and let $\cY$ be a grid in $Y$.
A \emph{combinatorial map} is
a multivalued map $\cF \colon \cX \mvmap \cY$,
that is, a set-valued map $\cF \colon \cX \to 2^{\cY}$,
such that $\cF (\xi) \neq \emptyset$ for all $\xi \in \cX$.
If $\cA \subset \cX$ and $\cB \subset \cY$
and $\cF (\cA) := \bigcup \{\cF (\xi) : \xi \in \cA\} \subset \cB$
then we denote $\cF \colon (\cX, \cA) \mvmap (\cY, \cB)$.

%{\bf Geometric realization.}
The \emph{geometric realization}
of a multivalued map $\cF \colon \cX \mvmap \cY$
is the relation $F \subset X \times Y$
defined as follows: $(x, y) \in F$ if there exist
$\xi_X \in \cX$ and $\xi_Y \in \cY$
such that $x \in \xi_X$, $y \in \xi_Y$, and $\xi_Y \in \cF (\xi_X)$.
The geometric realization of a multivalued map
$\cF \colon (\cX, \cA) \mvmap (\cY, \cB)$ is the correspondence $(F, F')$
such that $F$ is the geometric realization of $\cF \colon \cX \mvmap \cY$
and $F'$ is the geometric realization of the multivalued map
$\cF' \colon \cA \mvmap \cB$,
where $\cF' (\xi) := \cF (\xi)$ for all $\xi \in \cA$.
It is easy to see that the geometric realization
of a combinatorial map is an upper semicontinuous multivalued map.

%{\bf Representation of a continuous map.}
\begin{definition}
A combinatorial map $\cF \colon (\cX, \cA) \mvmap (\cY, \cB)$
is a \emph{representation} of a continuous map $f \colon (X,A) \to (Y,B)$,
where $X = |\cX|$, $A = |\cA|$, $Y = |\cY|$, $B = |\cB|$,
if $f (\xi) \subset |\cF (\xi)|$ for all $\xi \in \cX$.
\end{definition}

We remark that in many cases of practical interest,
if an analytic description of $f$ is given explicitly or implicitly
then a representation of $f$ is effectively computable.
Furthermore, this representation can be chosen to approximate $f$
sufficiently closely to ensure that the assumptions
of Theorem~\ref{thm:extAcycl} are satisfied.
A variety of efficient tools for this are provided by CAPD \cite{capd}.

To construct representations in the context of data
we introduce the following concept.

\begin{definition}
\label{def:sampling}
Given a continuous map $f\colon (X,A)\to (Y,B)$,
any finite subset $S$ of the graph of $f$
is called \emph{sampling data for $f$}.
Given grid representations $(\cX, \cA)$ and $(\cY, \cB)$
for $(X,A)$ and $(Y, B)$, respectively, define
\emph{the combinatorial map
$\cF^S \colon (\cX,\cA) \mvmap (\cY,\cB)$ induced by the samples $S$}
as follows: $\eta \in \cF^S(\xi)$ if and only if
$x \in \vert \xi \vert$ and $f(x)\in\vert\eta\vert$
for some $(x,f(x))\in S$.
\end{definition}

%\subsection{Computing $f_*$ from a combinatorial representation}
%\label{sec:combRepr}

%{\bf Application to combinatorial representations.}
Assume that a combinatorial representation
$\cF \colon (\cX, \cA) \mvmap (\cY, \cB)$
of an unknown continuous map $f$ is given
and we are interested in the computation of $f_*$.
Let $(F, F')$ be the geometric realization of $\cF$.
Then $\cF$ is said to be {\em homologically complete}
(respectively, {\em consistent}) whenever $(F, F')$
is homologically complete (respectively, consistent).
A combinatorial map $\cG$
\emph{is a homologically consistent enlargement of $\cF$}
if the geometric realization of $\cG$
is a homologically consistent enlargement
of the geometric realization of $\cF$.
Consider the projections $p \colon (F, F') \to (X, A)$
and $q \colon (F, F') \to (Y, B)$.
If $\cF$ is both homologically complete and homologically consistent,
then we define
\begin{equation}
\label{def:inducedMapOfComb}
\cF_* := \widetilde{q_*} \circ (\widetilde{p_*})^{-1},
\end{equation} 
where $\widetilde{p_*}$ and $\widetilde{q_*}$
are the homomorphisms induced by $p_*$ and $q_*$,
respectively, on the quotient space $H_* (F, F') / \ker p_*$.

\begin{remark}
\label{rem:homExtForComb}
The results in Section \ref{sec:homExt} transparently carry over
to combinatorial maps by applying them to their geometric realizations.
\end{remark}

\begin{remark}
\label{rem:hom}
Observe that \eqref{def:inducedMapOfComb} is independent of the method
chosen to compute $p_*$ and $q_*$.
A variety of different techniques to perform these computations
can be found in \cite{HMMN13,MMP05,PR14}.
\end{remark}

Returning to the setting of data, we have the following result.

\begin{proposition}
\label{prop:sampling}
Let $f \colon (X, A) \to (Y, B)$ be a continuous map
admitting a homogically consistent representation
$\cF \colon (\cX,\cA) \mvmap (\cY,\cB)$.
Let $S$ be some sampling data for $f$.
Let $\cF^S$ be the combinatorial map induced by the samples $S$.
Then $\cF^S$ is homologically consistent.
Moreover, if $\cF^S$ is homologically complete then $f_* = \cF^S_*$.
\end{proposition}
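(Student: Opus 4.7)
The plan is to exhibit the given representation $\cF$ as a homologically consistent enlargement of $\cF^S$ and then invoke Theorem~\ref{thm:combinedtheorem}. To set this up, I would first compare $\cF^S$ with $\cF$: given $\xi \in \cX$ and $\eta \in \cF^S(\xi)$, Definition~\ref{def:sampling} provides a sample $(x, f(x)) \in S$ with $x \in |\xi|$ and $f(x) \in |\eta|$; since $\cF$ represents $f$, one has $f(x) \in f(|\xi|) \subset |\cF(\xi)|$, which pins down $\eta$ as a cell of $\cF(\xi)$ (at least in the generic case that $f(x)$ lies in the interior of $\eta$). Passing to geometric realizations, this should yield $F^S \subset F$ and $(F^S)' \subset F'$, so $\cF$ is an enlargement of $\cF^S$ in the sense of Section~\ref{sec:prelim}. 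Since $\cF$ is homologically consistent by hypothesis, Proposition~\ref{prop:consExt} then immediately gives that $\cF^S$ is homologically consistent, which is the first claim.

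For the second claim, assume additionally that $\cF^S$ is homologically complete. Because $\cF$ is a representation of $f$, the graph of $f$ lies in $F$, so $f$ is a continuous selector of $\cF$. Thus $f$ is a continuous selector of the homologically consistent enlargement $\cF$ of $\cF^S$, while $\cF^S$ itself is homologically complete. Theorem~\ref{thm:combinedtheorem}, applied with the geometric realization of $\cF^S$ in the role of the base correspondence and that of $\cF$ in the role of the enlargement, then yields $f_* = \cF^S_*$, completing the argument.

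I expect the main technical obstacle to be the inclusion $\cF^S \subset \cF$ in the degenerate case where a sample $(x, f(x))$ has $f(x)$ lying on a face shared by several grid cells of $\cY$: Definition~\ref{def:sampling} places every such cell into $\cF^S(\xi)$, whereas $\cF(\xi)$ may contain only one of them, so the pointwise set-theoretic inclusion can fail. A cleaner way to handle this would be to replace $\cF$ throughout the argument by the pointwise union $\cF \cup \cF^S$ and verify that this combined correspondence is still homologically consistent; since the extra cells come from only finitely many sample points and appear only on lower-dimensional shared faces, the homology of the relevant pairs should be unaffected, but this is the step that deserves the most careful justification.
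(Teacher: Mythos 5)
Your argument is essentially the paper's own proof: the paper likewise asserts $\cF^S \subset \cF$ (directly from Definition~\ref{def:sampling} together with the representation property), deduces homological consistency of $\cF^S$ from Proposition~\ref{prop:consExt}, and obtains $f_* = \cF^S_*$ from Theorem~\ref{thm:combinedtheorem}, with $\cF$ playing the role of the homologically consistent enlargement admitting $f$ as a continuous selector. The boundary-sample degeneracy you flag is real but is not addressed in the paper either (its proof simply states the inclusion); only note that your suggested repair via $\cF \cup \cF^S$ would need more than a ``lower-dimensional faces'' argument, since putting an extra cell $\eta$ into $\cF^S(\xi)$ adds the full product $\xi \times \eta$ to the graph, and homological consistency is not in general inherited by enlargements (Proposition~\ref{prop:consExt} only goes the other way).
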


\begin{proof}
By Definition~\ref{def:sampling}, $\cF^S \subset \cF$.
Then $\cF^S$ is homologically consistent
by Proposition~\ref{prop:consExt}.
The remaining result is an immediate consequence
of Theorem~\ref{thm:combinedtheorem}.
\end{proof}

\begin{remark}
In fact, we can do somewhat better than Proposition~\ref{prop:sampling}
and tolerate \emph{noisy samples}. A sufficient condition is
that there exists a homologically consistent correspondence
containing both $f$ and the noisy samples.
Given known bounds on the noise, it may be possible to give
a general result of this form using ideas along the lines
of Section~\ref{sec:acyclExt}. We do not attempt this here,
but please see the discussion in Example~\ref{example:noisysample}.
\end{remark}

%====================================================

\subsection{Applications and Examples}
\label{sec:reconstr}

In what follows, we provide three specific examples which illustrate
the importance of homological consistency and homological completeness.
We give an example satisfying homological completeness
and homological consistency, an example where homological consistency fails,
and an example where homological completeness fails.
To make the computations discussed in Examples \ref{example:noisysample}
and \ref{ex:henon} reproducible, the software used for these computations
is available at~\cite{chompdata}.

\begin{example}
\label{example:noisysample}
Our first example illustrates a successful application
of Theorem~\ref{thm:combinedtheorem}.
Consider a sampling from the a double winding map
\[
f( \cos \theta, \sin \theta ) = ( \cos 2\theta, \sin 2\theta ).
\]
Rather than using~Proposition~\ref{prop:sampling},
we want to demonstrate the full strength
of Theorem~\ref{thm:combinedtheorem}.
To this end we generated a combinatorial map
$\mathcal{F}\colon \cX\mvmap\cX$ by computing samples of $f$
but adding a small level of random noise.
The noise results in $f$ not being a continuous selector of $\mathcal{F}$.
Nevertheless, as we demonstrate the conditions
of Theorem~\ref{thm:combinedtheorem} are satisfied
and thus $\mathcal{F}$ can be used to obtain $f_*$.

To generate our example we proceed as follows.
We divide the plane region $[-2,2]^2$ into $256 \times 256$ grid elements
and choose $\cX$ to be those grid elements within $.01$ distance
(under the sup-norm) to the unit circle.
We generate $3{,}000$ samples on the unit circle of the plane,
evaluate $f$ on each sample point, add bounded noise
uniformly distributed in $[-.1,.1]^2$,
and radially project the result to the unit circle.
From these noisy samples of $f$ we constructed a combinatorial map $\cF$.
We check computationally that (i) $\mathcal{F}$ does not contain $f$
and that (ii) $\mathcal{F}$ is homologically complete.
The fact that $\mathcal{F}$ admits a homologically consistent enlargement
containing $f$ follows from the results of Section \ref{sec:acyclExt},
an appropriate choice of good cover,
and the observation that the noise was small.
We omit the details, but remark that this implies
that Theorem~\ref{thm:combinedtheorem} is applicable.

As a check we compute the homomorphism induced in homology
by the closed correspondence $(F,F')$ corresponding to $\mathcal{F}$
and obtain $H_*(F, F') \cong (\mathbb{Z}^2, \mathbb{Z}^{29}, 0, \ldots)$.
Computing the induced map on homology of the projections $p$ and $q$,
we find that $p_*(g) = q_*(g) = 0$ for all but one $0$-cycle basis element,
where we had $p_*(g_0) = q_*(g_0)$ and all but one $1$-cycle basis element,
where we found $q_*(g_1) = 2 p_*(g_1)$. Hence $\mathcal{F}_* = f_*$,
the induced map on homology of a double winding map,
in accordance with Theorem~\ref{thm:combinedtheorem}.
\end{example}

\begin{figure}[htbp]
%\centerline{\includegraphics[width=0.4\textwidth]{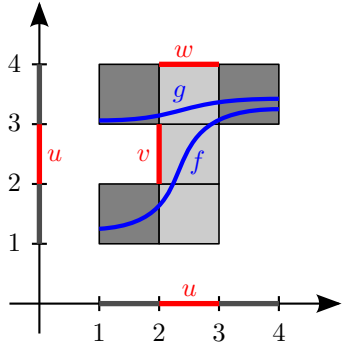}}
\def\svgwidth{0.35\textwidth}
\centerline{\input{figures/wrongMap.tex}}
\caption{A sample representation $\cF$ of two continuous maps
$f$ and $g$ such that $f_* \neq g_*$,
described in Example~\ref{ex:wrongMap}.
The homology generators discussed in the text
correspond to the segments labeled $u$, $v$ and $w$.
Note that
$\ker p_* = \langle v \rangle \not\subset \langle w \rangle = \ker q_*$.}
\label{fig:wrongMap}
\end{figure}

\begin{example} 
\label{ex:wrongMap}
Our second example illustrates a very simple situation
that shows that if $(F, F')$ is not homologically consistent
(that is, $\ker p_* \not\subset \ker q_*$),
then this may lead to ambiguity in determining $f_*$ from $\cF$.
Consider the following objects (see Figure~\ref{fig:wrongMap}):
\begin{eqnarray*}
&& \cX := \cY := \{[1,2], [2,3], [3,4]\}, \\
&& \cA := \cB := \{[1,2], [3,4]\}, \\
&& \cF ([1,2]) := \{[1,2], [3,4]\}, \\
&& \cF ([2, 3]) := \{[1,2], [2,3], [3,4]\}, \\
&& \cF ([3, 4]) := \{[3,4]\}.
\end{eqnarray*}
Let $(F, F')$ be the geometric realization
of $\cF \colon (\cX, \cA) \mvmap (\cY, \cB)$.
Then $H_1 (X, A) \cong R$ and $H_1 (F, F') \cong R \oplus R$,
and the homology of these spaces
at the levels different from $1$ is trivial.
Let $u$ be a generator of $H_1 (X, A) = H_1 (Y, B)$
corresponding to the segment $[2, 3]$.
Choose generators $v$, $w$ of $H_1 (F, F')$
so that $v$ corresponds to $\{2\} \times [2, 3]$
and $w$ corresponds to $[2, 3] \times \{4\}$
(see Figure~\ref{fig:wrongMap}).
Then $p_1 (v) = 0$ and $p_1 (w) = u$,
and thus $\ker p_1 = \langle v \rangle$.
On the other hand, $q_1 (v) = u$ and $q_1 (w) = 0$,
so $\ker q_1 = \langle w \rangle$.
Obviously, $\ker p_* \not\subset \ker q_*$.
Indeed, it is possible to show two continuous maps
such that $f_1 (u) = u$ and $g_1 (u) = 0$;
in particular, $f_*$ is a nontrivial homomorphism,
and $g_*$ is trivial.
Graphs of such sample maps
are shown in Figure~\ref{fig:wrongMap}.
\end{example}

\begin{example}
\label{ex:henon}
For our third example, we consider a combinatorial map
which is homologically consistent but not homologically complete.
We analyze time series generated by the H\'enon map 
\begin{equation}
\label{eq:henon}
 (x,y) \mapsto (1 - ax^2 + y, bx)
\end{equation}
at the classical parameter values $a = 1.4$ and $b = 0.3$.
We produce a neighborhood of the attractor from a time series
generated of the form $(x_n)_{n=100}^{N}$, $N=100{,}000$
where $x_0 = (0,0)$. We construct a grid by uniformly dividing
the planar region $[-2,2]^2$ into $256 \times 256$ grid elements.
We set $\cX$ to be the set of grid elements that contain at least one
of the points from the time-series and set $\cA =\emptyset$.
See Figure \ref{fig:HenonGrid}.

\begin{figure}[htbp]
% ratio: 3.3 or 0.3030303...
{%
\setlength{\fboxsep}{.01pt}%
\setlength{\fboxrule}{.1pt}%
\fbox{\includegraphics[width=\textwidth,height=0.3030303\textwidth]{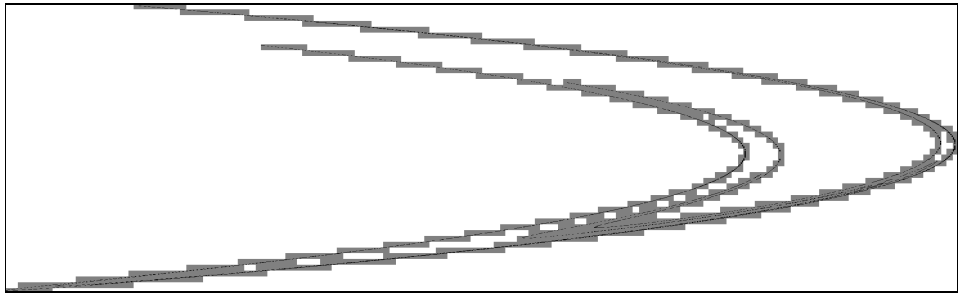}}%
}%
\caption{\label{fig:HenonGrid}%
The grid $\cX$ computed in the planar region $[-2,2]^2$
to outer approximate the time series points
approaching the H\'enon attractor.
The bounds of the image are
$[-1.29688, 1.28125] \times [-0.390625, 0.390625]$.
The time series points are plotted in black;
the grid elements of $\cX$ are gray.
Note the 1-cycles produced by the over-approximation.}
\end{figure}

The combinatorial map $\cF$ is defined by the property
that $\eta \in \cF(\xi)$ whenever $x_i \in \vert\xi\vert$
and $x_{i+1} \in \vert\eta\vert$ for some $i \geq 100$.
We compute the homomorphism induced in homology
by the correspondence $(F, F')$ associated with $\cF$,
the homology of the planar set $X$ corresponding to $\cX$,
and the induced maps on homology $p_*$ and $q_*$
for the projections $p, q$ from $(F,F')$ to $(X,A)$.
(Note: $F'=A=\emptyset$.) We find that
$H_*(F, F') = (\mathbb{Z},\mathbb{Z}^{21}, 0, 0, \cdots)$
and $H_*(X, A) = (\mathbb{Z}, \mathbb{Z}^{18}, 0, 0, \cdots)$.
The specific entries of the matrices $p_*$ and $q_*$
is of limited importance so we only report the following information:
(i)~we obtain homological consistency,
and (ii)~on the first level $p_*$ is rank deficient;
in particular $\mbox{rank }p_1 = 15 < 18$.
Since we see that $p_1$ is not surjective,
we conclude that $\cF$ is not homologically complete.

It is worth exploring this example further. 
As is indicated in Henon's original work~\cite{henon}
at the chosen parameter values \eqref{eq:henon} is invertible
and there exists a simply connected polygonal trapping region.
Thus, at first glance it is reasonable to expect
that the neighborhood of the attractor characterized by $\cX$
should be simply connected.
In particular, the $18$ 1-cycles of $H_1(X,A)$
(these are visible in Figure \ref{fig:HenonGrid} as interior holes
of the gray region) are `artifacts' due to the outer approximation
via the cubical grid $\cX$.
The map $p_*$ is not surjective since for at least a few
of these artifacts there is no corresponding cycle
in the complex corresponding to the combinatorial map $\cF$.
We can intuitively understand how this happens.
An artifact occurs in $H_*(X,A)$ when two disconnected parts
of the H\'enon attractor are present in the same grid element
and allow a spurious cycle to form.
However if those disconnected parts of the attractor
map to sufficiently separated points then we will not see
a corresponding spurious cycle formed in the combinatorial map.
We might think to eliminate these artifacts by refining our grid,
but due to the fractal-like nature of the H\'enon attractor one expects
to compute artifactual homology at every level of resolution.
Thus a lack of homological completeness is a general phenomenon
that should be expected when analyzing time series on strange attractors.

An interesting idea is to use the induced maps on homology of projections
from the combinatorial map to clean up at least some of the artifacts
in the homology of an outer approximation of an attractor.
Once we do this, we replace $H_*(X,A)$ with $\mbox{im } p_*$
and recover homological completeness. In the current example,
doing this produces a map $q_* \circ p_*^{-1}$.
We checked and found that this map was nilpotent.
This agrees with what we expect, as Conley theory tells us
that the Conley index of a simply connected attractor on the 1st level
ought to be shift equivalent to the zero matrix.
It is not clear what the conditions must be
in order to rigorously justify this procedure;
we leave the precise development of this idea to future work.
\end{example}

%%%%%%%%%%%%%%%%%%%%%%%%%%%%%%%%%%%%%%%%%%%%%%%%%%%%%

\section*{Acknowledgements}
\label{sec:ack}

The authors gratefully acknowledge the support
of the Lorenz Center which provided an opportunity for us
to discuss in depth the work of this paper.
Research leading to these results has received funding from
Fundo Europeu de Desenvolvimento Regional (FEDER)
through COMPETE---Programa Operacional Factores de Competitividade (POFC)
and from the Portuguese national funds
through Funda\c{c}\~{a}o para a Ci\^{e}ncia e a Tecnologia (FCT)
in the framework of the research project FCOMP-01-0124-FEDER-010645
(ref.\ FCT PTDC/MAT/098871/2008),
as well as from the People Programme (Marie Curie Actions)
of the European Union's Seventh Framework Programme (FP7/2007-2013)
under REA grant agreement no.~622033 (supporting PP).
The work of KM and SH has been partially supported
by NSF grants NSF-DMS-0835621, 0915019, 1125174, 1248071,
and contracts from AFOSR and DARPA.
The work of HK was supported by Grant-in-Aid
for Scientific Research (No.\ 25287029),
Ministry of Education, Science, Technology, Culture and Sports, Japan.

%%%%%%%%%%%%%%%%%%%%%%%%%%%%%%%%%%%%%%%%%%%%%%%%%%%%%

% Bibliographies can be prepared with BibTeX using amsplain,
% amsalpha, or (for "historical" overviews) natbib style.
\bibliographystyle{amsplain}
\bibliography{mapbib}
% Insert the bibliography data here.

%%%%%%%%%%%%%%%%%%%%%%%%%%%%%%%%%%%%%%%%%%%%%%%%%%%%%

\end{document}